\documentclass[11pt,draft]{article}

\usepackage{amssymb,amsmath,amsthm}
\usepackage{epsfig}
\usepackage{verbatim}
\usepackage{graphicx}

\usepackage{color}

\newtheorem{thm}{Theorem}[section]

\newtheorem{rem}[thm]{Remark}

\newtheorem{lemma}[thm]{Lemma}

\newcommand{\R}{\mathbb{R}}
\newcommand{\Q}{\Bbb{Q}}

\newcommand{\N}{\Bbb{N}}

\newcommand{\D}{\displaystyle}

\newcommand{\grad}{\nabla}

\newcommand{\F}{\mathcal{F}}
\newcommand{\gradp}{\grad^{\bot}}
\newcommand{\dx}{\partial_x}

\newcommand{\sgn}{{\rm sgn}\thinspace}

\newcommand{\al}{\alpha}

\newcommand{\ep}{\varepsilon}

\newcommand{\De}{\Delta_\al}

\newcommand{\eqdef}{\overset{\mbox{\tiny{def}}}{=}}

\begin{document}

\title{On the global existence for the Muskat problem}

\author{Peter Constantin, Diego C{\'o}rdoba, \\ Francisco Gancedo and Robert M. Strain}

\date{\today}

\maketitle

\begin{abstract}
The Muskat problem models the dynamics of the interface between two incompressible immiscible fluids with different constant densities.
In this work we prove three results.  First we prove an $L^2(\R)$ maximum principle, in the form of a new ``log''  conservation law \eqref{ln} which is satisfied by the equation \eqref{ec1d} for the interface.  
Our second result is a proof of global existence of Lipschitz continuous solutions for initial data that satisfy 
$\|f_0\|_{L^\infty}<\infty$ and $\|\partial_x f_0\|_{L^\infty}<1$. We take advantage of the fact that the bound $\|\partial_x f_0\|_{L^\infty}<1$ is propagated by solutions, which grants strong compactness properties in comparison to the log conservation law.
Lastly, we prove a global existence result for unique strong solutions if the initial data is smaller than an explicitly computable constant, for instance $\| f\|_1 \le 1/5$. Previous results of this sort used a small constant $\epsilon \ll1$ which was not explicit \cite{Peter,SCH,DP,Esch2}.

\end{abstract}

{\bf Keywords: } Porous media, incompressible flows, fluid interface, global existence.

{\bf Mathematics subjet classification: } 35A01, 76S05, 76B03

\setcounter{tocdepth}{1}
\tableofcontents

\section{Introduction}

The Muskat problem models the dynamics of an interface between two incompressible immiscible fluids with different characteristics, in porous media. The phenomena have been described using the experimental Darcy's law that is given in two dimensions by the following momentum equation:
\begin{equation}
\frac{\mu}{\kappa}u=-\nabla p-g(0,\rho).
\notag
\end{equation}
Here $\mu$ is viscosity, $\kappa$ permeability of the isotropic medium, $u$ velocity, $p$ pressure, $g$ gravity and $\rho$ density. Saffman and Taylor \cite{S-T} related this problem with the evolution of an interface in a Hele-Shaw cell since both physical scenarios can be modeled analogously (see also \cite{Peter} and reference therein).  Recently, the well-posedness has been shown without surface tension in \cite{ADP} (for previous work on the topic see \cite{Am},  \cite{Yi} and \cite{DP}) using arguments that rely upon the boundedness properties of the Hilbert transforms associated to $C^{1,\gamma}$ curves. Precise estimates are obtained with arguments involving conformal mappings, the Hopf maximum principle and Harnack inequalities. The initial data have to satisfy the Rayleigh-Taylor condition initially, otherwise the problem has been shown to be ill-posed  \cite{SCH}, \cite{DP}. With surface tension, the initial value problem becomes more regular, and instabilities do not appear \cite{Esch1}. The case of more than one free boundary has been treated in \cite{DP3} and \cite{Esch2}. 

 In this paper we consider an interface given by fluids of different constant densities $\rho^i$, with the same viscosity and without surface tension. The step function $\rho$ is  represented by
$$\rho(x,t)=\left\{\begin{array}{rl}
                  \rho^1,& x\in \Omega^1(t),\\
                   \rho^2,& x\in\Omega^2(t)=\R^2\setminus \Omega^1(t),
                \end{array}\right. $$
for $\Omega^i(t)$ connected regions. As the density $\rho$ is transported by the flow
$$
\rho_t+u\cdot\grad\rho=0,
$$
the free boundary evolves with the two dimensional velocity $u = (u_1, u_2)$. The Biot-Savart law recovers $u$ from the vorticity given by $\omega=\partial_{x_1}u_2-\partial_{x_2}u_1$, via the integral operator $$u(x,t)=\gradp \Delta^{-1}\omega(x,t).$$
Darcy's law then provides the relation $\omega=-\partial_{x_1}\rho$ where $\mu/\kappa$ and $g$ are taken equal to $1$ for the sake of simplicity. Then the velocity field can be obtained in terms of the density as follows:
$$
u(x,t)=PV\int_{\R^2}K(x-y)\rho(y,t)dy-\frac12(0,\rho(x,t)).
$$
Here the kernel $K$ is of Calder\'on-Zygmund type:
$$
K(x)=\frac{1}{\pi}\left(-\frac{x_1x_2}{|x|^2},\frac{x_1^2-x_2^2}{2|x|^2}\right),
$$
(see \cite{St3}). As a consequence of $\rho \in L^{\infty}(\R^2\times\R^+)$ it follows that the velocity belongs to $BMO$. Moreover, as $K$ is an even kernel, it has the  property that the mean of $K$ (in the principal value sense) are zero on hemispheres \cite{B-C}, and this yields a bound of the velocity $u(x,t)$ in terms of $C^{1,\gamma}$ norms ($0 <\gamma < 1$) of the free boundary \cite{DP3}.

 In order to have a well-posed problem we need to consider initially an interface parameterized as a graph of a function with the denser fluid below: $\rho^2>\rho^1$ as in  \cite{DP}. The interface is characterized as a graph of the function $(x,f(x,t))$. This characterization is preserved by the system and $f$ satisfies
\begin{align}
\begin{split}\label{ec1d}
\D f_t(x,t)&=\frac{\rho^2\!-\!\rho^1}{2\pi}PV\int_{\R} ~ d\al  ~
\frac{(\partial_x f(x,t)-\partial_x
f(x-\al,t))\al}{\al^2+(f(x,t)-f(x-\al,t))^2},
\\
f(x,0)&=f_0(x),
\quad x\in\R.
\end{split}
\end{align}
 The above equation can be linearized around the flat solution to find the following nonlocal partial differential equation
\begin{align}
\begin{split}\label{le}
f_t(x,t)&=-\frac{\rho^2-\rho^1}{2}\Lambda f(x,t),\\
f(\al,0)&=f_0(\al),\quad \al\in\R,
\end{split}
\end{align}
where the operator $\Lambda$ is the square root of the Laplacian. This linearization shows the parabolic character of the problem in the stable case ($\rho^2 > \rho^1$), as well as the ill-posedness in the unstable case ($\rho^2 < \rho^1$).

 The nonlinear equation (\ref{ec1d}) is ill-posed in the unstable situation and locally well-posed in $H^k$ ($k\geq 3$) for the stable case \cite{DP}. Furthermore the stable system gives a maximum principle $\|f\|_{L^{\infty}}(t)\leq \|f\|_{L^{\infty}}(0)$, see \cite{DP2}; decay rates are obtained for the periodic case as: 
 $$\|f\|_{L^\infty}(t)\leq \|f_0\|_{L^\infty}e^{-Ct},
 $$
and also for the case on the real line (flat at infinity) as: 
$$
\D\|f\|_{L^\infty}(t)\leq \frac{\|f_0\|_{L^\infty}}{1+Ct}.
$$ 
Numerical solutions performed in \cite{DPR} further indicate a regularizing effect. The decay
of the slope and the curvature is stronger than the rate of decay of the maximum of the difference between $f$ and its mean value. Thus, the irregular
regions in the graph are rapidly smoothed and the flat regions are smoothly bent. It is shown analytically in \cite{DP2} that, if the initial data satisfy $\|\partial_{x}f_0\|_{L^\infty}< 1$, then there is a maximum principle that shows that this derivative remains in absolute value smaller than 1. 

  The three main results we present in this paper are the following:\\
  
  1) In Section \ref{sec:L2}, we prove that
  a solution of \eqref{ec1d} satisfies
\begin{multline}\label{ln}
\|f\|^2_{L^2}(t)+\frac{\rho^2\!-\!\rho^1}{2\pi}\!\int_0^t  ds ~\int_\R d\al ~\int_\R dx~
\ln \left(1+\Big(\frac{f(x,s)\!-\!f(\al,s)}{x-\al}\Big)^2\right)  
\\
=\|f_0\|^2_{L^2}.
\end{multline}
Furthermore, we have the the inequality
$$
\int_\R dx ~ 
\int_\R d\al ~
\ln \left(1+\Big(\frac{f(x,s)\!-\!f(\al,s)}{x-\al}\Big)^2\right)
\leq 
C\|f\|_{L^1}(s).
$$
This identity shows a major difference with the linear equation \eqref{le} where the evolution of the $L^2$ norm provides a gain of half derivative for $\rho^2>\rho^1$:
\begin{equation}\label{lee}
\|f\|^2_{L^2}(t)+\left(\rho^2\!-\!\rho^1\right)\!\int_0^t ~ ds ~
\|\Lambda^{1/2} f\|_{L^2}^2(s) 
=\|f_0\|^2_{L^2},
\end{equation}
or equivalently
$$
\|f\|^2_{L^2}(t)+\frac{\rho^2\!-\!\rho^1}{2\pi}\!\int_0^t ds~ \int_\R  dx  ~ \int_\R  d\al  ~
\left(\frac{f(x,s)\!-\!f(\al,s)}{x-\al}\right)^2
=\|f_0\|^2_{L^2}.
$$
Notice that this linear energy balance \eqref{lee} directly implies compactness, whereas compactness does not follow from the nonlinear $L^2$ energy \eqref{ln}.\\

 2) In Section \ref{sec:init1} we prove global in time existence of Lipschitz continuous solutions in the stable case. We understand the solution of \eqref{ec1d} using its weak formulation:
\begin{align}
\begin{split}\label{sol:wf}
\int_0^T dt ~ \int_{\R} dx ~\eta_t(x,t)f(x,t)&
+
\int_{\R} dx ~ \eta(x,0)f_0(x)
\\
=
\int_0^T dt ~ \int_{\R} dx ~
\eta_x(x,t)\frac{\rho^2\!-\!\rho^1}{2\pi}
&
PV\int_{\R} d\al ~  \arctan\left(\frac{f(x,t)-f(\al,t)}{x-\al}\right).
\end{split}
\end{align} 
This equality  holds 
$\forall \eta\in C^\infty_c([0,T)\times\R)$.  For initial data $f_0$ satisfying $\|f_0\|_{L^\infty}<\infty$ and $\|\partial_x f_0\|_{L^\infty}<1$ we prove that
there exists a solution of \eqref{sol:wf} that remains in the spaces $f(x,t)\in C([0,T]\times\R)\cap L^\infty([0,T]; W^{1,\infty}(\R))$ for any $T>0$. We point out that, because of the condition $f\in L^\infty(\R)$, the nonlinear term in \eqref{sol:wf} has to be understood as a principal value for the integral of two functions, one in $\mathcal{H}^1$ and the other in $BMO$ \cite{St3}.

There are several results of global existence for small initial data (small compared to $1$ or $\epsilon \ll 1$) in several norms (more regular than Lipschitz) \cite{Peter,SCH,DP,Esch2} taking advantage of the parabolic character of the equation for small initial data. Here we show that we just need $\|\dx f_0\|_{L^\infty}<1$, therefore
$$
\Big|\frac{f_0(x)-f_0(\al)}{x-\al}\Big|<1.
$$
Notice that considering the first order term in the Taylor series of $\ln(1+y^2)$ for $|y|<1$, then the identity \eqref{ln} becames \eqref{lee}. \\
 
 3) Our third result, discussed in Section \ref{sec:15}, proves global existence of unique $C([0,T];H^3(\R))$ solutions if initially the norm \eqref{norm:s} of $f_0$ is controlled as $\|f_0\|_1< c_0$ where
$$
\|f_0\|_{1}=\int_{\R}d\xi ~ |\xi| |\hat{f}_0(\xi)|.
$$    The key point here, in comparison to previous work \cite{Peter,SCH,DP,Esch2}, is that the constant $c_0$ can be easily explicitly computed.  We have checked numerically that $c_0$ is not that small; it is greater than $1/5$.

\section{$L^2$ maximum principle}\label{sec:L2}

In this section we provide  a proof of the identity \eqref{ln}. 
As we are in the stable case, we take without loss of generality $(\rho^2-\rho^1)/(2\pi)=1$ to simplify the exposition. The contour equation \eqref{ec1d} can be written as follows:
\begin{align*}
\begin{split}
\D f_t(x,t)&=PV\int_{\R}\partial_x \arctan\Big(\frac{f(x,t)-
f(x-\al,t)}{\al}\Big)d\al.
\end{split}
\end{align*}
We multiply by $f$, integrate over $dx$, and use integration by parts to observe
\begin{align*}
\begin{split}
\frac{1}{2}\frac{d}{dt}\|f\|^2_{L^2}(t)&=
-\int_\R  dx  ~ \int_\R d\al ~   
f_x(x)\arctan\Big(\frac{f(x,t)-f(x-\al,t)}{\al}\Big) 
\\
&=
-\int_\R  dx  ~ \int_\R d\al ~   
f_x(x)\arctan\Big(\frac{f(x,t)-f(\al,t)}{x-\al}\Big).
\end{split}
\end{align*}
We use the splitting
\begin{align*}
\begin{split}
\frac{1}{2}\frac{d}{dt}\|f\|^2_{L^2}(t)&=-\int_\R\!\int_\R\Big(\!\frac{f(x,t)\!-\!f(\al,t)}{x-\al}\!\Big)\arctan\Big(\!\frac{f(x,t)\!-\!f(\al,t)}{x-\al}\!\Big)dx
d\al\\
-\int_\R\!\int_\R
\Big(&\!\frac{f_x(x)(x\!-\!\al)\!-\!(f(x,t)\!-\!f(\al,t))}{x-\al}\!\Big)\arctan\Big(\!\frac{f(x,t)\!-\!f(\al,t)}{x-\al}\!\Big)dx
d\al \\
&=I_1+I_2.
\end{split}
\end{align*}
We also use the function $G$ defined by 
$$
G(x)=x\arctan x-\ln\sqrt{1+x^2}=\int_0^x dy ~ \arctan y.
$$
With these, it is easy to observe that
$$
I_2=-\int_\R\int_\R (x-\al)\partial_xG\Big(\frac{f(x,t)-f(\al,t)}{x-\al}\Big) ~ dx d\al.
$$
The identity below
$$
\lim_{|x|\rightarrow \infty}(x-\al)G\Big(\frac{f(x,t)-f(\al,t)}{x-\al}\Big)=0,
$$
allows us to integrate by parts to obtain
\begin{align*}
\begin{split}
I_2&=\int_\R\int_\R G\Big(\frac{f(x,t)-f(\al,t)}{x-\al}\Big)dx d\al\\
&=-I_1-\int_\R\int_\R \ln\sqrt{1+\Big(\frac{f(x,t)-f(\al,t)}{x-\al}\Big)^2}dx d\al.\\
\end{split}
\end{align*}
This equality gives
\begin{align*}
\begin{split}
\frac{1}{2}\frac{d}{dt}\|f\|^2_{L^2}(t)&=-\int_\R\int_\R
\ln\sqrt{1+\Big(\frac{f(x,t)-f(\al,t)}{x-\al}\Big)^2}dx d\al,\\
\end{split}
\end{align*}
and integrating in time we get the desired identity.

 The above equality indicates that for large initial data, the system is not parabolic at the level of $f$. We prove below the inequality
$$
\int_\R\int_\R
\ln \Big(1+\Big(\frac{f(x,t)-f(\al,t)}{x-\al}\Big)^2\Big)dx d\al \leq 4\pi\sqrt{2}\|f\|_{L^1}(t)
$$
which shows that there is no gain of derivatives for the stable case. If the initial data are positive, then   $\|f\|_{L^1}(t)\leq \|f_0\|_{L^1}$ follows from \cite{DP2}, so that the dissipation is bounded in terms of the initial data with zero derivatives.

For the proof of the inequality, we denote by $J$ the integral
$$
J\eqdef \int_\R\int_\R
\ln\Big(1+\Big(\frac{f(x)-f(x-\al)}{\al}\Big)^2\Big) ~ dx d\al.
$$
We now use that the function $\ln(1+y^2)$ is increasing to observe that
$$
J\leq \int_\R\int_\R
\ln\Big(1+\frac{2|f(x)|^2}{\al^2}+\frac{2|f(x-\al)|^2}{\al^2}\Big) ~ dx d\al.
$$
The inequality $\ln(1+a^2+b^2)\leq \ln(1+a^2)+\ln(1+b^2)$ yields
$$
J\leq \int_\R\int_\R
\ln\Big(1+\frac{2|f(x)|^2}{\al^2}\Big)dx d\al+\int_\R\int_\R
\ln\Big(1+\frac{2|f(x-\al)|^2}{\al^2}\Big)dx d\al,
$$
and therefore
$$
J\leq 2\int_\R\int_\R
\ln\Big(1+\frac{2|f(x)|^2}{\al^2}\Big)dx d\al=K.
$$
For $K$ it is easy to get
$$
K=2\int_{\{x: |f(x)|\neq 0\}}dx \int_\R d\al ~ \ln\Big(1+\frac{2|f(x)|^2}{\al^2}\Big),
$$
so that an easy integration in $\al$ provides
$$
K=4\pi\sqrt{2}\int_{\{x: |f(x)|\neq 0\}}dx ~ |f(x)|=4\pi\sqrt{2}\|f\|_{L^1}.
$$
This concludes our discussion of the $L^2$ maximum principle \eqref{ln} for \eqref{ec1d}.

\section{A global existence result for data less than $1/5$}\label{sec:15}

In this section we prove global existence of $C([0,T];H^3(\R))$ small data solutions.  A key point is to consider the norm 
\begin{equation}
 \|f\|_s
 \eqdef
 \int_{\R}d\xi ~ |\xi|^s |\hat{f}(\xi)|,
 \quad 
 s \ge 1.
 \label{norm:s}
\end{equation}
 This norm allows us to use Fourier techniques for small initial data that give rise to a global existence result for classical solutions.

 \begin{thm}\label{15:thm}
 Suppose that initially $f_0 \in H^3(\R)$ and $\|f_0\|_1 < c_0$, where $c_0$ is a constant such that
$$
2\sum_{n\geq 1} (2n+1)^{2+\delta}c_0^{2n}\leq 1
$$ 
for $0<\delta<1/2$. Then there is a unique solution $f$ of \eqref{ec1d} that satisfies $f \in C([0,T];H^3(\R))$ 
for any $T>0$.\end{thm}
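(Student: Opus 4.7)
The plan is to combine the $H^3$ local well-posedness from \cite{DP} with a Fourier-side a priori estimate on the Wiener-type norm $\|\cdot\|_1$, which propagates the smallness assumption and prevents finite-time blow-up of $H^3$. Since the hypothesis gives $\|\partial_x f\|_{L^\infty}\leq\|f\|_1<c_0<1$, the Taylor series $\arctan(y)=\sum_{n\geq 0}(-1)^n y^{2n+1}/(2n+1)$ converges uniformly on the quotients $(f(x)-f(x-\al))/\al$ appearing in \eqref{ec1d}, and I may rewrite
$$
f_t = \sum_{n\geq 0}\frac{(-1)^n}{2n+1}\,PV\!\int_\R \partial_x\Bigl(\frac{f(x)-f(x-\al)}{\al}\Bigr)^{\!2n+1}\!d\al = \sum_{n\geq 0}N_n(f),
$$
where $N_0(f)=-\pi\Lambda f$ is the dissipative linear part and each $N_n$, $n\geq 1$, is a $(2n+1)$-multilinear correction.

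The crux of the argument is an estimate on the Fourier multiplier attached to $N_n$. Using $(1-e^{-i\al\xi})/\al = i\xi\int_0^1 e^{-is\al\xi}\,ds$ on each factor and then integrating in $\al$ (which produces $\int_\R e^{-i\al\sum s_k\xi_k}d\al = 2\pi\delta(\sum s_k\xi_k)$), I expect to obtain
$$
\widehat{N_n(f)}(\xi) = \frac{-2\pi\,\xi}{2n+1}\int\!\!\cdots\!\!\int \xi_1\hat f(\xi_1)\cdots\xi_{2n+1}\hat f(\xi_{2n+1})\,I(\xi_1,\ldots,\xi_{2n+1})\,\delta\Bigl(\textstyle\sum\xi_k-\xi\Bigr)d\xi_1\cdots d\xi_{2n+1},
$$
with $I(\xi_1,\ldots,\xi_{2n+1})=\int_{[0,1]^{2n+1}}\delta(s_1\xi_1+\cdots+s_{2n+1}\xi_{2n+1})\,ds$. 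Integrating the inner $\delta$ against $s_{k_*}$, where $|\xi_{k_*}|=\max_j|\xi_j|$, produces the pointwise bound $|I|\leq 1/\max_j|\xi_j|$. Combined with $|\xi|\leq\sum_k|\xi_k|$ and the elementary inequality $|\xi_k|^2/\max_j|\xi_j|\leq|\xi_k|$, this yields the key estimate $\|N_n(f)\|_1\leq 2\pi(2n+1)\|f\|_1^{2n}\|f\|_2$ for $n\geq 1$.

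Using the pointwise Fourier inequality $\frac{d}{dt}|\hat f(\xi,t)|\leq -\pi|\xi||\hat f(\xi,t)|+\sum_{n\geq 1}|\widehat{N_n(f)}(\xi,t)|$ (valid because $N_0$ multiplies $\hat f$ by a negative real number) and integrating in $|\xi|\,d\xi$, I get the differential inequality
$$
\frac{d}{dt}\|f\|_1 \leq -\Bigl(\pi - 2\pi\sum_{n\geq 1}(2n+1)\,\|f\|_1^{2n}\Bigr)\|f\|_2.
$$
The assumption $2\sum_{n\geq 1}(2n+1)^{2+\delta}c_0^{2n}\leq 1$ dominates $2\sum_{n\geq 1}(2n+1)c_0^{2n}\leq 1$, so the bracket is nonnegative on $\{\|f\|_1\leq c_0\}$; a continuity argument then forces $\|f(t)\|_1\leq\|f_0\|_1<c_0$ for every $t\geq 0$.

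To promote this a priori bound to a global $C([0,T];H^3)$ solution, I will run the analogous Fourier energy estimate weighted by $|\xi|^{s+1}$ with $s=3$; the convexity inequality $(\sum_k|\xi_k|)^{s+1}\leq(2n+1)^s\sum_k|\xi_k|^{s+1}$ introduces an additional combinatorial factor $(2n+1)^{1+\delta}$ in each nonlinear piece, so the full hypothesis $2\sum(2n+1)^{2+\delta}c_0^{2n}\leq 1$ is exactly what closes the resulting $H^3$-energy inequality and rules out blow-up. Uniqueness follows from the same multiplier bounds applied to the difference of two solutions in a low-order norm. The main technical obstacle is extracting the factor $1/\max_j|\xi_j|$ from the symbol $I$: this one extra power of the top frequency is what allows the polynomial weight $(2n+1)^{2+\delta}$ to be absorbed, and is what makes the explicit threshold $c_0>1/5$ in the theorem reachable rather than an uncomputable $\epsilon\ll 1$.
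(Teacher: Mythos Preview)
Your overall strategy---prove $\|f\|_1$ is nonincreasing via a Fourier multiplier bound, then run a higher-weight version to prevent $H^3$ blow-up---is exactly the paper's. But your symbol computation takes a genuinely different route. The paper keeps \emph{one} factor $m(\eta,\al)=(1-e^{-i\eta\al})/\al$ untouched, converts the remaining $2n$ factors via $m=i\eta\int_0^1 e^{i\al(s-1)\eta}ds$, and then evaluates $PV\!\int_\R e^{i\al A}\frac{d\al}{\al}=i\pi\,\sgn A$; this yields a bounded symbol $|m_n|\le 2$ with no frequency gain. You instead convert \emph{all} $2n+1$ factors and use $\int_\R e^{-i\al A}d\al=2\pi\delta(A)$, extracting the gain $|I|\le 1/\max_k|\xi_k|$. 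The two are equivalent in effect: the paper carries $2n+1$ frequency factors and $|m_n|\le 2$; you carry $2n+1$ frequency factors, an extra $|\xi|$ from the outer $\partial_x$, and the $1/\max$ that eats it back. Both land on the same bound $2(2n+1)\|f\|_2\|f\|_1^{2n}$. Your route is clean but requires justifying the distributional identity (the $\al$-integral is not absolutely convergent after Fubini); the paper's $\sgn$ computation sidesteps this since everything is a bounded function.

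There is, however, a genuine gap in your continuation step. You cannot run the weighted estimate at $|\xi|^{s+1}$ with $s=3$: first, the convexity factor would then be $(2n+1)^3$, not $(2n+1)^{1+\delta}$, so the hypothesis would not close it; second, $\int|\xi|^4|\hat f_0|\,d\xi$ is not controlled by $\|f_0\|_{H^3}$. The correct weight is $|\xi|^{2+\delta}$ with $0<\delta<1/2$: this is exactly the range in which Cauchy--Schwarz gives $\|f_0\|_{2+\delta}\le C\|f_0\|_{H^3}$, and the convexity inequality $|\xi|^{2+\delta}\le(2n+1)^{1+\delta}\sum_k|\xi_k|^{2+\delta}$ produces precisely the factor $(2n+1)^{2+\delta}$ in the hypothesis. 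Moreover, the passage from the resulting bound $\|f(t)\|_{2+\delta}\le\|f_0\|_{2+\delta}$ to global $H^3$ existence is not a direct $H^3$ energy estimate; it goes through the embedding $\|f\|_{C^{2,\delta}}\lesssim\|f\|_{L^\infty}+\|f\|_1+\|f\|_{2+\delta}$ and the $C^{2,\delta}$ continuation criterion of \cite{DP}. Your proposal conflates these steps.
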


\begin{rem}
We compute the limit case $\delta=0$, so that
$$
2\sum_{n\geq 1} (2n+1)^{2}c_0^{2n}\leq 1
$$
for 
$$
0\leq c_0\leq\frac{1}{3}\sqrt{7-\frac{14\times5^{2/3}}{\sqrt[3]{9\sqrt{39}-38}}+2\sqrt[3]{5(9\sqrt{39}-38)}}\approx 0.2199617648835399.
$$
In particular, 
$$
2\sum_{n\geq 1}(2n+1)^{2.1}c_0^{2n} < 1,
$$
if say $c_0 \le 1/5$.
\end{rem}
 
 The remainder of this section is devoted to the proof of Theorem \ref{15:thm}.
 The contour equation for the stable Muskat problem \eqref{ec1d} can be written as
\begin{equation}
f_t(x,t)=-\rho (\Lambda f + T(f)),
\label{muskatEQ2d}
\end{equation}
where we recall that $\rho=\frac{\rho^2-\rho^1}{2}>0$ and we have
\begin{align}\label{td}
T(f)=\frac1{\pi}\int_\R\frac{\dx f(x)-\dx f(x-\al)}{\al}
\frac{\big(\frac{f(x)-f(x-\al)}{\al}\big)^2}{1+\big(\frac{f(x)-f(x-\al)}{\al}\big)^2}d\al,
\end{align}
We define
$$
\De f(x)
 \eqdef
\frac{f(x)-f(x-\al)}{\al}.
$$
We consider the evolution of the norm
$
\|f\|_1
$
\eqref{norm:s}:
\begin{align*}
\frac{d}{dt}\|f\|_1(t)&=\int_{\R}d\xi ~ |\xi| ~ \sgn (\hat{f}(\xi)) ~ \hat{f}_t(\xi)
\\
&=\rho \int_{\R}d\xi ~ |\xi| ~ \sgn (\hat{f}(\xi)) ~ (-|\xi|\hat{f}(\xi)-\F(T)(\xi)).
\end{align*}
We will show that the first term dominates the second term if initially $$\|f_0\|_1< \sqrt{(4-\sqrt{13})/6},\mbox{ where }\sqrt{(4-\sqrt{13})/6} >1/4.$$    The key point, again, is that the constant is given explicitly.

Notice that under the local existence theorem of \cite{DP}, 
this bound will be propagated for a short time. Then we may use the Taylor expansion
$$
\frac{x^2}{1+x^2}=\sum_{n=1}^\infty (-1)^{n+1} x^{2n},
$$
to obtain

\begin{equation}\label{tdt}
T(f)=\frac{-1}{\pi}\D\sum_{n\geq 1}(-1)^n\int_\R\dx (\De f)\,  (\De f)^{2n}d\al.
\end{equation}
Notice that
\begin{gather*}
\F(\De f)=\hat{f}(\xi) m(\xi,\al),\qquad \F(\dx \De f)=-i\xi \hat{f}(\xi) m(\xi,\al),
\\
m(\xi,\al)=\frac{1-e^{-i\xi\al}}{\al}.
\end{gather*}
Therefore
$$
\F(\dx (\De f)\,  (\De f)^{2n})=((-i\xi\hat{f} m)\ast (\hat{f} m)\ast \cdots \ast (\hat{f} m))(\xi,\al),
$$
with $2n$ convolutions, one with $-i\xi\hat{f} m$ and $2n-1$ with $\hat{f} m$.
Using \eqref{tdt}
\begin{align*}
\F(T)(\xi)
&=
\frac{i}{\pi}\D\sum_{n\geq 1}(-1)^n\int_\R d\al \int_\R d\xi_1\cdots\int_\R d\xi_{2n} (\xi\!-\!\xi_1)\hat{f}(\xi\!-\!\xi_{1})m(\xi\!-\!\xi_1,\al)
\\ 
\times \hat{f}(\xi_1\!&-\!\xi_2)m(\xi_1\!-\!\xi_2,\al)\cdots\hat{f}(\xi_{2n-1}\!-\!\xi_{2n})m(\xi_{2n-1}\!-\!\xi_{2n},\al)
\hat{f}(\xi_{2n})m(\xi_{2n},\al) 
\\
=\D\sum_{n\geq 1}
&
\int_\R d\xi_1\cdots\int_\R d\xi_{2n} (\xi\!-\!\xi_1)\hat{f}(\xi\!-\!\xi_{1})
\left(
\prod_{i=1}^{2n-1}  \hat{f}(\xi_i -\!\xi_{i+1}) 
\right)
\hat{f}(\xi_{2n}) M_n,
\end{align*}
where $M_n=M_n(\xi,\xi_1,\ldots,\xi_{2n})$ is given by
\begin{align}
\label{Mul}
M_n\eqdef\frac{i}{\pi}(-1)^n\!\int_\R\! m(\xi\!-\!\xi_1,\al)
\left(
\prod_{i=1}^{2n-1}  m(\xi_i\!-\!\xi_{i+1},\al)  
\right)
m(\xi_{2n},\al)d\al.
\end{align}
Since $m(\xi, \alpha ) = i \xi \int_0^1 ds~ e^{i \al (s-1) \xi}$
we obtain
\begin{align*}
M_n(\xi,\xi_1,\ldots,\xi_{2n})= m_n(\xi,\xi_1,\ldots,\xi_{2n}) \,(\xi_1-\xi_2)\cdots(\xi_{2n-1}-\xi_{2n})\xi_{2n},
\end{align*}
with
\begin{align*}
m_n&=\frac{i}{\pi}\int_0^1\!\!ds_1\cdots\int_0^1\!\!ds_{2n}\int_\R d\al ~
\frac{1-e^{-i\al(\xi-\xi_1) }}{\al}\\
&\qquad\qquad \times \exp\Big(i\al\sum_{j=1}^{2n-1}(s_j-1)(\xi_j-\xi_{j+1})+i\al(s_{2n}-1)\xi_{2n}\Big)
\\
&=\frac{i}{\pi}\int_0^1\!\!ds_1\cdots\int_0^1\!\!ds_{2n} \Big(PV\int_\R
\exp(i\al A)\frac{d\al}{\al}-PV\int_\R \exp(i\al B)\frac{d\al}{\al}\Big)\\
&=-\int_0^1ds_1\cdots\int_0^1ds_{2n}(\sgn A-\sgn B),
\end{align*}
and
\begin{align*}
A=\sum_{j=1}^{2n-1}(s_j-1)(\xi_j-\xi_{j+1})+(s_{2n}-1)\xi_{2n}=-\xi_1+\sum_{j=1}^{2n}s_j\xi_j -\sum_{j=1}^{2n-1}s_j\xi_{j+1}.
\end{align*}
Additionally
\begin{align*}
B&= -(\xi-\xi_1)+\sum_{j=1}^{2n-1}(s_j-1)(\xi_j-\xi_{j+1})+(s_{2n}-1)\xi_{2n}
\\
&=-\xi +\sum_{j=1}^{2n}s_j\xi_j-\sum_{j=1}^{2n-1}s_j\xi_{j+1}.
\end{align*}
It follows that
\begin{align*}
\F(T)(\xi)\!=\!\D\sum_{n\geq 1}&\int_\R d\xi_1\cdots\int_\R d\xi_{2n} ~
m_n(\xi,\xi_1,\ldots,\xi_{2n})\,(\xi-\xi_1)\hat{f}(\xi-\xi_{1})
\\
&\times
\left(
\prod_{i=1}^{2n-1}  (\xi_i -\!\xi_{i+1}) \hat{f}(\xi_i -\!\xi_{i+1}) 
\right)
\xi_{2n}\hat{f}(\xi_{2n}) ,
\end{align*}
with $|m_n(\xi,\xi_1,\ldots,\xi_{2n})|\leq2$.  
We then have
\begin{align*}
\int_{\R}d\xi ~ |\xi|&|\F(T)(\xi)|\leq 2 \D\sum_{n\geq 1}\int_{\R}d\xi\int_\R d\xi_1\cdots\int_\R d\xi_{2n} ~ 
|\xi|  |\xi-\xi_1||\hat{f}(\xi-\xi_{1})|
  \\
&\times  |\xi_1-\xi_2||\hat{f}(\xi_1-\xi_2)|\cdots|\xi_{2n-1}-\xi_{2n}||\hat{f}(\xi_{2n-1}-\xi_{2n})||\xi_{2n}||\hat{f}(\xi_{2n})|.
\end{align*}
The inequality $|\xi|\leq |\xi-\xi_1|+|\xi_1-\xi_2|+\cdots+|\xi_{2n-1}-\xi_{2n}|+|\xi_{2n}|$ yields
\begin{align*}
\int_{\R}d\xi |\xi||\F(T)(\xi)| &\leq 2\sum_{n\geq
1}(2n+1)\Big(\int_{\R}d\xi |\xi|^2 |\hat{f}(\xi)|\Big)\Big(\int_{\R}d\xi |\xi| |\hat{f}(\xi)|\Big)^{2n} ,
\end{align*}
and therefore
\begin{align*}
\int_{\R}d\xi |\xi||\F(T)(\xi)| &\leq  \Big(\int_{\R}d\xi |\xi|^2 |\hat{f}(\xi)|\Big)2\sum_{n\geq
1}(2n+1)\|f\|_1^{2n}\\
&\leq \Big(\int_{\R}d\xi |\xi|^2 |\hat{f}(\xi)|\Big) \frac{2\|f\|^2_1(3-\|f\|^2_1)}{(1-\|f\|^2_1)^2}.
\end{align*}
Notice  $\frac{2x^2(3-x^2)}{(1-x^2)^2}<1$ if $0\leq x< \sqrt{\frac{4-\sqrt{13}}{6}}\approx 0.256400964$.  If  $\|f_0\|_1<\sqrt{\frac{4-\sqrt{13}}{6}}$, then
 this inequality will continue to hold for some time so that
$$
\frac{d}{dt}\|f\|_1(t)\leq 0,
$$
and we conclude that $\|f\|_1(t) \le \|f_0\|_1$ if $\|f_0\|_1<\sqrt{\frac{4-\sqrt{13}}{6}}$.

Now we  repeat the argument but with $s>1$ in \eqref{norm:s}.
Our goal is to obtain
\begin{equation}\label{qsmp}
\frac{d}{dt}\|f\|_{2+\delta}(t)\leq 0, 
\quad
0<\delta<1/2.
\end{equation}
Let us point out that 
$$
\|f_0\|_{2+\delta}\leq C(\|f_0\|_{L^2}+\|\partial_x^3 f_0\|_{L^2})
$$
for $0<\delta<1/2$. Using the inequality
$$
|\xi|^{2+\delta}\leq (2n+1)^{1+\delta}(|\xi-\xi_1|^{2+\delta}+|\xi_1-\xi_2|^{2+\delta}+\cdots+
|\xi_{2n-1}-\xi_{2n}|^{2+\delta}+|\xi_{2n}|^{2+\delta}),
$$
we proceed as before to get
$$
\int_{\R}|\xi|^{2+\delta}|\F(T)(\xi)|d\xi\leq \int_{\R}|\xi|^{3+\delta}
|\hat{f}(\xi)| d\xi \, 2\sum_{n\geq 1}(2n+1)^{2+\delta}\|f\|_1^{2n}.
$$
In particular, taking $\|f\|_1$ small enough we find
$$
\int_{\R}|\xi|^{2+\delta}|\F(T)(\xi)|d\xi\leq \int_{\R}|\xi|^{3+\delta}
|\hat{f}(\xi)| d\xi,
$$
and bound \eqref{qsmp} therefore holds.

If $\|f\|_{C^{2,\delta}}$ remains bounded ($0<\delta<1$), then from previous work \cite{DP}, we can conclude that there is global existence in $C([0,T];H^3(\R))$ for any $T>0$. Since for 
$$
|g|_{C^\delta}=\sup_{y\neq 0}\frac{|g(x+y)-g(x)|}{|y|^{\delta}},
$$ 
we find
$$
\frac{|g(x+y)-g(x)|}{|y|^\delta}=\Big|\frac{C}{|y|^\delta}\int_\R \hat{g}(\xi)e^{ix\xi}(e^{iy\xi}-1)d\xi \Big|\leq C\int_\R |\xi|^\delta|\hat{g}(\xi)|d\xi,$$
and therefore
$$
\|f\|_{C^{2,\delta}}\leq C
\left(
\|f\|_{L^\infty}+\int_{\R} d\xi ~ |\xi||\hat{f}(\xi)|+ \int_\R d\xi ~ |\xi|^{2+\delta}|\hat{f}(\xi)|
\right).
$$
We conclude that the solution can be continued for all time if $\|f_0\|_1$ is initially smaller than a computable constant $c_0$ and
$
\|f_0\|_{2+\delta}
$
is bounded.  The constant $c_0$
  defined by the condition 
$$
2\sum_{n\geq 1}(2n+1)^{2+\delta}c_0^{2n} \leq 1,
$$
which has been numerically verified to be no smaller than say $1/5$.

\section{Global existence for initial data smaller than 1}\label{sec:init1}

We prove now the existence of a weak solution of the system \eqref{ec1d} which can be written as follows: 
\begin{equation}
f_t=\frac{\rho}{\pi}\partial_xPV\int_{\R}\arctan\left(\frac{f(x)-f(x-\al)}{\al}\right)d\al,
\label{muskatAT}
\end{equation}
where $\rho=(\rho^2-\rho^1)/2$.  We first extend the sense of the contour equation with a weak formulation: for any $\eta(x,t)\in C^\infty_c([0,T)\times\R)$, a weak solution $f$ should satisfy \eqref{sol:wf}.
We show here that this is the case if $\|\partial_x f_0\|_{L^\infty}<1$. 
Then it follows that  $\|f\|_{L^\infty}(t)\leq\|f_0\|_{L^\infty}$ and $\|\partial_x f\|_{L^\infty}(t)\leq\|\partial_x f_0\|_{L^\infty}<1$ as in \cite{DP2}. Then the solution is in fact Lipschitz continuous by Morrey's inequality. The main result we prove below is the following:

\begin{thm}\label{weakSOLthm}
Suppose that $\|f_0\|_{L^\infty}<\infty$ and $\|\partial_x f_0\|_{L^\infty}<1$.Then there exists a global in time weak solution of \eqref{sol:wf} that satisfies 
$$
f(x,t)\in C([0,T]\times\R)\cap L^\infty([0,T];W^{1,\infty}(\R)).
$$
In particular $f$ is Lipschitz continuous.
\end{thm}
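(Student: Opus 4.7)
The strategy is a vanishing-viscosity approximation: regularize both the initial data and the equation, use the maximum principles of \cite{DP2} to obtain uniform $L^\infty$ and Lipschitz bounds, extract a subsequential limit by compactness, and then verify that the limit satisfies the weak formulation \eqref{sol:wf}.

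First, for each $\epsilon>0$, regularize $f_0$ to obtain a smooth $f_0^\epsilon\in H^3(\R)$ (for example by mollification combined with a slowly varying spatial cutoff) such that $\|f_0^\epsilon\|_{L^\infty}\le\|f_0\|_{L^\infty}$, $\|\partial_x f_0^\epsilon\|_{L^\infty}<1$, and $f_0^\epsilon\to f_0$ locally uniformly. Solve the parabolic regularization
\begin{equation*}
f^\epsilon_t = \frac{\rho}{\pi}\partial_x PV\!\int_{\R}\arctan\Big(\frac{f^\epsilon(x)-f^\epsilon(x-\al)}{\al}\Big)d\al + \epsilon\,\partial_x^2 f^\epsilon
\end{equation*}
with initial condition $f_0^\epsilon$; the local $H^3$ well-posedness of \cite{DP} adapts routinely to this regularizing perturbation. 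The extra $\epsilon\partial_x^2 f^\epsilon$ has the correct sign at a spatial maximum of $|f^\epsilon|$ or $|\partial_x f^\epsilon|$, so the maximum principles of \cite{DP2} survive and yield, uniformly in $\epsilon$ and in $t\in[0,T]$,
$$
\|f^\epsilon\|_{L^\infty}(t)\le\|f_0\|_{L^\infty},\qquad \|\partial_x f^\epsilon\|_{L^\infty}(t)<1,
$$
which combined with the standard $H^3$ continuation criterion extends $T^\epsilon$ to any fixed $T<\infty$.

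The uniform bounds yield equicontinuity in $x$, while the equation (whose right-hand side is controlled by the slope bound $<1$) gives equicontinuity in $t$, so Arzel\`a--Ascoli furnishes a subsequence $f^{\epsilon_k}\to f$ locally uniformly on $[0,T]\times\R$. The limit lies in $C([0,T]\times\R)\cap L^\infty([0,T];W^{1,\infty}(\R))$; the bound $\|\partial_x f\|_{L^\infty}<1$ is preserved by lower semicontinuity, and Morrey's inequality then delivers the stated Lipschitz regularity. It remains to pass to the limit in \eqref{sol:wf}. The viscous contribution $\epsilon\int\!\!\int\eta_{xx}f^\epsilon$ vanishes by the $L^\infty$ bound, and $\int\eta_t f^\epsilon\to\int\eta_t f$ by dominated convergence on $\supp\eta_t$. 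For the nonlinear term fix a compactly supported $\eta$ and split the inner integral at scale $R$: on $|\al|\le R$ the integrand $\arctan(\cdots)$ is bounded by $\pi/2$ and converges pointwise, so dominated convergence applies; on $|\al|>R$ the expansion $\arctan(y)=y+O(y^3)$ together with $|y|\le 2\|f_0\|_{L^\infty}/|\al|$ rewrites the tail as a linear Hilbert-type expression plus an absolutely convergent cubic remainder, and the linear piece is handled by the $\mathcal{H}^1$--$BMO$ pairing indicated directly after \eqref{sol:wf}.

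The principal obstacle is this final step: when $f$ is merely in $L^\infty$, the $\al$-integral in \eqref{sol:wf} is only conditionally convergent, so the passage to the limit in the arctan term is not a single application of dominated convergence but demands the $\mathcal{H}^1$--$BMO$ duality together with a careful far-field decomposition. A secondary technical point is arranging the approximation so that the strict bound $\|\partial_x f_0^\epsilon\|_{L^\infty}<1$ is preserved (and not merely $\le 1$), since both the propagation result of \cite{DP2} and the $\arctan$ Taylor expansion rely on this strict inequality.
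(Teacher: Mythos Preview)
Your overall strategy---regularize, propagate the $L^\infty$ and slope maximum principles, extract a limit by compactness, pass to the limit in the weak formulation---matches the paper's. But there is a genuine gap in the compactness step. You assert that ``the equation (whose right-hand side is controlled by the slope bound $<1$) gives equicontinuity in $t$,'' and then invoke Arzel\`a--Ascoli directly. This is not correct: written as $f^\epsilon_t=-\rho(\Lambda f^\epsilon+T(f^\epsilon))+\epsilon\partial_x^2 f^\epsilon$, the term $\Lambda f^\epsilon$ is \emph{not} uniformly bounded in $L^\infty$ by a Lipschitz bound on $f^\epsilon$ (the half-Laplacian maps $C^{0,1}$ into $BMO$, not $L^\infty$), and $\epsilon\partial_x^2 f^\epsilon$ is not uniformly controlled either. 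The paper confronts this head-on: it proves that $\partial_t f^\epsilon$ is uniformly bounded only in a weak negative-order space $W_*^{-2,\infty}(B_R)$ (the completion of $L^\infty(B_R)$ in the dual norm of $W^{2,1}_0(B_R)$), and then establishes an Aubin--Lions-type lemma showing that a uniform $W^{1,\infty}$ spatial bound together with this weak uniform time bound yields strong convergence in $L^\infty([0,T]\times B_R)$. Your direct Arzel\`a--Ascoli argument requires a uniform pointwise modulus of continuity in $t$ that the equation does not supply.

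There is a secondary issue with your regularization. The paper uses a more elaborate scheme than $\epsilon\partial_x^2$: it adds $-\epsilon C\Lambda^{1-\epsilon}f$ and, crucially, replaces the kernel $\alpha$ by $\phi^\epsilon(\alpha)=\alpha|\alpha|^{-\epsilon}$. This is not cosmetic. With the unmodified kernel, the small-$\alpha$ portion of the top-order nonlinear term in the $H^3$ energy estimate produces a divergent factor $\int_{|\alpha|<1}|\alpha|^{-1}\,d\alpha$ (or, after using the mean-value theorem, a term of size $\|\partial_x^4 f\|_{L^2}^2$ with an $O(1)$ coefficient that cannot be absorbed by $\epsilon\|\partial_x^4 f\|_{L^2}^2$). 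With $\phi^\epsilon$ this becomes $\int_{|\alpha|<1}|\alpha|^{-(1-\epsilon)}\,d\alpha=O(1/\epsilon)$, giving a bound of the form $C(\epsilon)\|\partial_x^3 f\|_{L^2}\|\partial_x^4 f\|_{L^2}$ which Young's inequality balances against the viscous dissipation. Your sentence ``the standard $H^3$ continuation criterion extends $T^\epsilon$ to any fixed $T$'' hides this: the continuation criterion from \cite{DP} is a $C^{2,\delta}$ bound, not Lipschitz, and it is not clear that the simpler viscosity alone closes the global $H^3$ estimate from the available a priori bounds.
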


The rest of this section is devoted to the proof of Theorem \ref{weakSOLthm}.
The first step is to prove global in time existence of classical solutions to the regularized model \eqref{regularizedM} below. This is done in Section \ref{sec:globalREG}.  Prior to that, in Section \ref{sec:apriori} we prove some necessary a priori bounds.  Then, in Section \ref{sec:approx} we explain how to approximate the initial data.  Section \ref{secweakSOL} shows how to prove the existence of solutions of \eqref{sol:wf}, subject to the strong convergence established in Section \ref{sec:strong}.

 From now, in the next two subsections we write $f=f^\varepsilon$ for the solution to \eqref{regularizedM} for the sake of simplicity of notation. The regularized model is given by
\begin{equation}
f_t(x,t)=-\ep C\Lambda^{1-\ep}f+\ep f_{xx}+\frac{\rho}{\pi}\partial_xPV\int_{\R}d\al ~ \arctan(\Delta_\al^\ep f(x)).
\label{regularizedM}
\end{equation}
where $C>0$ is an universal constant fixed below, and we define
$$
\Delta_\al^\ep f(x) \eqdef \frac{f(x)-f(x-\al)}{\phi(\al)},
$$
with $\phi(\al)=\phi^\ep(\al)=\al/|\al|^\ep$ and $\ep$ is small enough.
Initially we  consider the data $f_0\in W^{1,\infty}(\R)$ with
$\| \partial_x f_0\|_{L^\infty(\R)} < 1$.  We will explain how to approximate this initial data later on in Section \ref{sec:approx}.

\subsection{A priori bounds}\label{sec:apriori}

For the regularized system \eqref{regularizedM} we obtain the following two a priori bounds $$\|f\|_{L^\infty}\leq \|f_0\|_{L^\infty},\qquad \|\partial_x f\|_{L^\infty}\leq \|\partial_x f_0\|_{L^\infty}<1.$$
In order to prove the first one, we check the evolution of $$M(t)=\max_{x}f(x,t)=f(x_t,t).$$ 
Then, where $M$ is differentiable
\begin{multline*}
M'(t)=f_t(x_t,t)
\\
=-\ep C\Lambda^{1-\ep}f(x_t)+\ep f_{xx}(x_t)+\frac{\rho}{\pi}\partial_xPV\int_{\R}\arctan(\Delta_{\al}^\ep f({x_t}))d\al.
\end{multline*}
We have to deal with the third term above, as the first and the second ones have the correct sign. Now
\begin{equation}
I(x)=\partial_xPV\!\int_{\R}\arctan(\Delta_{\al}^\ep f({x}))d\al=\partial_xPV\!\int_{\R}\arctan(\Delta_{x-\al}^\ep f({x}))d\al,
\label{Idefin}
\end{equation}
and thus
\begin{align}
\begin{split}\label{nose}
I(x)&=\partial_xf(x)PV\!\int_{\R}\frac{\frac{1}{\phi(x-\al)}}{1+(\Delta_{x-\al}^\ep f({x}))^2}d\al\\
&\quad -(1-\ep)PV\!\int_{\R}\frac{\frac{f(x)-f(\al)}{|x-\al|^{2-\ep}}}{1+(\Delta_{x-\al}^\ep f({x}))^2}d\al.
\end{split}
\end{align}
Therefore $I(x_t)\leq 0$ (since $\partial_x f(x_t)=0$). Then $M'(t)\leq 0$ for a.e. $t\in(0,T]$ and $M(t)\leq M(0)$. Analogously $m(t)\geq m(0)$.

From \eqref{regularizedM} and \eqref{Idefin} we have
\begin{align*}
f_{xt}&=-\ep C\Lambda^{1-\ep}f_x+\ep f_{xxx}+\frac{\rho}{\pi}I_x.
\end{align*}
Using \eqref{nose} we rewrite $I(x)=J^1(x)+J^2(x)$ where
$$
J^1(x)=PV\int_{\R}\frac{f_x(x)(x-\al)-(f(x)-f(\al))}{|x-\al|^{2-\ep}}\frac{1}{1+(\Delta_{x-\al}^\ep f(x))^2}d\al,
$$
$$
J^2(x)=\ep PV\int_{\R}\frac{f(x)-f(x-\al)}{|\al|^{2-\ep}}\frac{1}{1+(\Delta_{\al}^\ep f(x))^2}d\al,
$$
to find
\begin{multline*}
J^1_x(x)=f_{xx}(x)PV\int_{\R}\frac{1}{\phi(x-\al)}\frac{1}{1+(\Delta_{x-\al}^\ep f(x))^2}d\al
\\
\quad-(2-\ep)PV\int_{\R}\frac{f_x(x)-\frac{f(x)-f(\al)}{x-\al}}{|x-\al|^{2-\ep}}\frac{1}{1+(\Delta_{x-\al}^\ep f(x))^2}d\al
\\
\quad-PV\int_{\R}\frac{f_x(x)(x-\al)-(f(x)-f(\al))}{|x-\al|^{2-\ep}}\frac{2\Delta_{x-\al}^\ep f(x)}{(1+(\Delta_{x-\al}^\ep f(x))^2)^2}
\\
\times
\frac{f_x(x)(x-\al)-(1\!-\!\ep)(f(x)-f(\al))}{|x-\al|^{2-\ep}}d\al,
\end{multline*}
and we split further $J^1_x(x)=K^1(x)+K^2(x)+K^3(x)+K^4(x)$ where
\begin{eqnarray*}
K^1(x) &= & f_{xx}(x)PV\int_{\R}\frac{1}{\phi(\al)}\frac{1}{1+(\Delta_{\al}^\ep f(x) )^2}d\al,
\\
K^2(x) &= & \ep PV\int_{\R}\frac{f_x(x)-\frac{f(x)-f(x-\al)}{\al}}{|\al|^{2-\ep}}\frac{1}{1+(\Delta_{\al}^\ep f(x))^2}d\al,
\\
K^3(x) &= & -PV\int_{\R}\frac{f_x(x)-\frac{f(x)-f(x-\al)}{\al}}{|\al|^{2-\ep}}
\frac{2}{1+(\Delta_{\al}^\ep f(x) )^2}d\al,
\end{eqnarray*}
and
\begin{multline*}
K^4(x)=-PV\int_{\R}d\al~ \frac{f_x(x)-\frac{f(x)-f(x\!-\!\al)}{\al}}{|\al|^{2-\ep}}\frac{2\Delta_{\al}^\ep f(x)}{(1+(\Delta_{\al}^\ep f(x))^2)^2}
\\
\times (f_x(x)|\al|^{\ep}-(1\!-\!\ep)\Delta_{\al}^\ep f(x)).
\end{multline*}
For $J^2$ it is easy to check that
\begin{align*}
J^2_x(x)&=\ep PV\int_{\R}\frac{f_x(x)-f_x(x-\al)}{|\al|^{2-\ep}}\frac{d\al}{1+(\Delta_{\al}^\ep f(x))^2}\\
&\quad -\ep PV\int_{\R}\frac{f_x(x)-f_x(x-\al)}{|\al|^{2-\ep}}\frac{2(\Delta_{\al}^\ep f(x))^2d\al}{(1+(\Delta_{\al}^\ep f(x))^2)^2}.
\end{align*}
Next, as we did before, we consider $M(t)=\max_x f_x=f_x(x_t,t)$. Where $M(t)$ is differentiable it follows
$$M'(t)=f_{xt}(x_t,t)=-\ep C\Lambda^{1-\ep}f_x(x_t)+\ep f_{xxx}(x_t)+\frac{\rho}{\pi}I_x(x_t).$$
Now we claim that if $M(t)<1$ then $M'(t)\leq 0$ for a.e.t. We can conclude analogously for $m(t)=\min_x f_x>-1$, $m'(t)\geq0$ for a.e.t.

We check that if $M(t)<1$, then
$$
-\ep C\Lambda^{1-\ep}f_x(x_t)+\ep f_{xxx}(x_t)+\frac{\rho}{\pi}I_x(x_t)\leq 0.
$$
We can use the following formulas for the operator $\Lambda^{1-\ep} f_x$
\begin{gather}
\Lambda^{1-\ep}f_x(x)=c_1(\ep)\int_\R\frac{f_x(x)-f_x(x-\al)}{|\al|^{2-\ep}}d\al,\label{L1}
\\
\Lambda^{1-\ep}f_x(x)=c_2(\ep)\int_\R\frac{f_x(x)-\frac{f(x)-f(x-\al)}{\al}}{|\al|^{2-\ep}}d\al,\label{L2}
\end{gather}
where $0<c_m\leq c_1(\ep),c_2(\ep)\leq c_M$ for $0\leq \ep\leq 1/4$. 



We claim that
$$
-\ep C\Lambda^{1-\ep}f_x(x_t)+\frac{\rho}{\pi}(K^2(x_t)+J^2_x(x_t))\leq 0.
$$
We will show that
$$
-\ep \frac{C}2\Lambda^{1-\ep}f_x(x_t)+\frac{\rho}{\pi}K^2(x_t)\leq 0,
$$
using \eqref{L2} and that
$$
-\ep \frac{C}2\Lambda^{1-\ep}f_x(x_t)+\frac{\rho}{\pi}J^2(x_t)\leq 0,
$$
by \eqref{L1}. In fact
\begin{multline*}
-\ep \frac{C}2\Lambda^{1-\ep}f_x(x_t)+\frac{\rho}{\pi}K^2(x_t)
\\
=-\ep PV\int_{\R}\frac{f_x(x_t)-\frac{f(x_t)-f(x_t-\al)}{\al}}{|\al|^{2-\ep}}\frac{\frac{Cc_2(\ep)}2(\Delta_{\al}^\ep f(x_t))^2+\frac{Cc_2(\ep)}2-\frac\rho\pi}{1+(\Delta_{\al}^\ep f(x_t))^2}d\al.
\end{multline*}
The mean value theorem gives
\begin{equation*}
|f(x)-f(x-\al)|/|\al|\leq \|f_x\|_{L^\infty}.
\label{mvt}
\end{equation*}
Thus if we take $C\geq \frac{2\rho}{c_m\pi}$ we obtain the first inequality. Also
\begin{multline*}
-\ep \frac{C}2\Lambda^{1-\ep}f_x(x_t)+\frac{\rho}{\pi}J^2(x_t)=
\\
-\ep PV\int_{\R}\frac{f_x(x_t)-f_x(x_t-\al)}{|\al|^{2-\ep}}\frac{
\frac{Cc_1(\ep)}{2}(\Delta_{\al}^\ep f(x_t))^2+\frac{Cc_1(\ep)}2-\frac\rho\pi}{1+(\Delta_{\al}^\ep f(x_t))^2}d\al
\\
-\ep PV\int_{\R}\frac{f_x(x_t)-f_x(x_t-\al)}{|\al|^{2-\ep}}\frac{2(\Delta_{\al}^\ep f(x_t))^2d\al}{(1+(\Delta_{\al}^\ep f(x_t))^2)^2}\leq 0.
\end{multline*}

We find $f_{xxx}(x_t)\leq 0$ and $K^1(x_t)=0$. We still have to deal with $K^3$ and $K^4$. Considering $K^3(x_t)+K^4(x_t)$, we realize that if
$$
P(\al)=2+2(\Delta_\al^\ep f({x_t}) )^2+2(\Delta_\al^\ep f({x_t}))(f_x(x_t)|\al|^{\ep}-(1\!-\!\ep)\Delta_\al^\ep f({x_t}) )\geq 0,
$$
we are done. We rewrite
$$
P(\al)=2+2\ep (\Delta_\al^\ep f({x_t}) )^2+2(\Delta_\al^\ep f({x_t}) )f_x(x_t)|\al|^{\ep},
$$
and therefore we need
$$
|(\Delta_\al^\ep f({x_t}) ) f_x(x_t)|\al|^{\ep}|\leq 1.
$$
This fact holds if
$$
|f|_{C^{1-2\ep}}=\sup_{\al\neq0}\frac{|f(x_t)-f(x_t-\al)|}{|\al|^{1-2\ep}}<1.
$$
Now we will check that if $\|f\|_{L^\infty}\leq \|f_0\|_{L^\infty}$ and $\|f_x\|_{L^\infty}<1$ then $|f|_{C^{1-2\ep}}<1$ for $\ep$ small enough uniformly. We replace $2\ep$ by $\ep$ without lost of generality. If $\|f_0\|_{L^\infty}=0$ or $\|f_x\|_{L^\infty}=0$ all done.
Otherwise
$$
\frac{|f(x_t)-f(x_t-\al)|}{|\al|^{1-\ep}}\leq \|f_x\|_{L^\infty}\delta^{\ep},
$$
for $0<|\al|\leq \delta$ and
$$
\frac{|f(x_t)-f(x_t-\al)|}{|\al|^{1-\ep}}\leq 2\frac{\|f_0\|_{L^\infty}}{\delta^{1-\ep}},
$$
for $|\al|\geq \delta$. We take $\delta^{1-\ep}=2\|f_0\|_{L^\infty}/\|f_x\|_{L^\infty}$ and therefore
$$
|f|_{C^{1-\ep}}\leq \max\{\|f_x\|_{L^\infty},\|f_x\|_{L^\infty}^{1-\frac{\ep}{1-\ep}}(2\|f_0\|_{L^\infty})^\frac{\ep}{1-\ep}\}.
$$
Now it is clear that given $\|f_0\|_{L^\infty}$, if $\|f_x\|_{L^\infty}<1$ there exists $\ep_0>0$ such that, for any $0\leq \ep\leq \ep_0$ it follows that $|f|_{C^{1-\ep}}\leq 1$.

\subsection{Global existence for the regularized model}\label{sec:globalREG}

We use here the a priori bounds to show global existence. Local existence can be easily proving using the local existence proof for the non-regularized Muskat problem \eqref{ec1d}, as in \cite{DP}. 
We use energy estimates and the Gronwall inequality. As we did for \eqref{ec1d}, it follows that
\begin{align*}
\frac{d}{dt}\|f\|_{L^2}(t)&=-\frac{\rho}{\pi}\int_\R\int_\R
\frac{1-\ep}{|x-\al|^\ep}\ln \Big(1+\Big(\frac{f(x,t)\!-\!f(\al,t)}{x-\al}\Big)^2\Big)dx d\al\\
&\quad -2C\ep\|\Lambda^{(1-\ep)/2} f\|_{L^2}(t)-2\ep\|f_x\|_{L^2}(t).
\end{align*}
Therefore
$
\|f\|_{L^2}(t)\leq \|f_0\|_{L^2}.
$

\begin{rem}
The theorem of this section can also be found with 
$$
\|f_0\|_{L^2}<\infty\quad \mbox{instead of} \quad \|f_0\|_{L^\infty}<\infty.
$$ 
We picked the version above because it is more general.  We see that if the solution satisfies initially a $L^2$ bound then
$
f(x,t)\in L^\infty([0,T];L^2(\R)).
$
\end{rem}

Next, we consider the evolution of
\begin{align*}
\int_\R \partial_x^3f\partial_x^3f_t dx&\leq -C\ep\|\Lambda^{(1-\ep)/2} \partial_x^3f\|_{L^2}(t)
-\ep\|\partial_x^4f_x\|^2_{L^2}+L_1+L_2,
\end{align*}
where
$$
L_1=\frac{\rho}{\pi}\int_{\R} \partial_x^3f(x)\partial_x^3\Big(PV\int_{\R}
\frac{f_x(x)-f_x(x-\al)}{\phi(\al)}d\al\Big) dx,
$$
$$
L_2=-\frac{\rho}{\pi} \int_{\R}\partial_x^3f(x)\partial_x^3\Big(PV\int_{\R} \frac{f_x(x)-f_x(x-\al)}{\phi(\al)}\frac{(\Delta_\al^\ep f(x))^2d\al}{1+(\Delta_\al^\ep f(x))^2}\Big) dx.
$$
The term $f_x(x)$ cancel out in $L_1$ due to the PV and an integration by parts shows that
$$
L_1=-\frac{\rho}{\pi}C(\ep)\int_{\R} \partial_x^3f(x)\Lambda^{1-\ep}\partial_x^3f(x)dx\leq 0.$$
For $L_2$ one finds
$$
L_2=\frac{\rho}{\pi} \int_{\R}\partial_x^4f(x)\partial_x^2\Big(PV\int_{\R} \frac{f_x(x)-f_x(x-\al)}{\phi(\al)}\frac{( \Delta_\al^\ep f(x) )^2d\al}{1+( \Delta_\al^\ep f(x) )^2}\Big) dx,
$$
and the splitting $L_2=M_1+M_2+M_3$ gives
$$
M_1=\frac{\rho}{\pi} \int_{\R}\partial_x^4f(x)\int_{\R} \frac{\partial_x^3f(x)-\partial_x^3f(x-\al)}{\phi(\al)}\frac{( \Delta_\al^\ep f(x) )^2d\al}{1+( \Delta_\al^\ep f(x) )^2} dx,
$$
\begin{align*}
M_2=&\frac{3\rho}{\pi} \int_{\R}\partial_x^4f(x)\int_{\R} \frac{\partial_x^2f(x)-\partial_x^2f(x-\al)}{\phi(\al)}\frac{f_x(x)-f_x(x-\al)}{\phi(\al)}\\
& \qquad\qquad\qquad\qquad\qquad\qquad\qquad\qquad\qquad\times\frac{2( \Delta_\al^\ep f(x) )d\al}{(1+( \Delta_\al^\ep f(x) )^2)^2} dx,
\end{align*}
$$
M_3=\frac{\rho}{\pi} \int_{\R}\partial_x^4f(x)\int_{\R} \Big(\frac{f_x(x)-f_x(x-\al)}{\phi(\al)}\Big)^3\frac{(2-6(\Delta_\al^\ep f(x) )^2)d\al}{(1+( \Delta_\al^\ep f(x) )^2)^3} dx.
$$

For $M_1$ we proceed as follows
\begin{multline*}
\left| M_1 \right| =\frac{\rho}{\pi} \left(\int_{|\al|>1} d\al\int_\R dx+\int_{|\al|<1} d\al\int_\R dx\right)
\\
\leq C(\ep)(\|f\|_{L^\infty}+1)\|\partial_x^3f\|_{L^2}\|\partial_x^4f\|_{L^2}.
\end{multline*}
The identity
$$
\partial_x^2f(x)-\partial_x^2f(x-\al)=\int_0^1\partial_x^3f(x+(s-1)\al) \al ds,
$$
yields
\begin{multline*}
\left| M_2 \right|
\leq
\frac{6\rho}{\pi} \int_0^1ds\int_{|\al|<1} \frac{d\al}{|\al|^{1-2\ep}}\int_\R dx
|\partial_x^4f(x)||\partial_x^3f(x+(s-1)\al)|
\\
\times(|f_x(x)|+|f_x(x-\al)|)
\\
+
\frac{6\rho}{\pi} \int_0^1\!\!ds\int_{|\al|>1}~ \frac{d\al}{|\al|^{2-3\ep}}\int_\R \!\!dx ~ |\partial_x^4f(x)||\partial_x^3f(x\!+\!(s\!-\!1)\al)|
\\
\times(|f_x(x)|\!+\!|f_x(x\!-\!\al)|)(|f(x)|\!+\!|f(x\!-\!\al)|),
\end{multline*}
and therefore
$$
\left| M_2 \right| \leq C(\ep)(1+\|f\|_{L^\infty})\|\partial_x^3f\|_{L^2}\|\partial_x^4f\|_{L^2}\|f_x\|_{L^\infty}.
$$

In $M_3$ we use the splitting $M_3=N_1+N_2$ where
$$
N_1=\frac{\rho}{\pi}\int_{|\al|>1} d\al\int_\R dx,\qquad
N_2=\int_{|\al|<1} d\al\int_\R dx,
$$
and then
\begin{align*}
\left| N_1 \right|
&\leq \frac{16\rho}{\pi}\|f_x\|^2_{L^\infty} \int_0^1ds\int_{|\al|>1} \frac{d\al}{|\al|^{3-3\ep}}\int_\R dx
|\partial_x^4f(x)|(|f_x(x)|+|f_x(x-\al)|) \\
&\leq C\|f_x\|^2_{L^\infty}\|f_x\|_{L^2}\|\partial_x^4f\|_{L^2}\\
&\leq C\|f_x\|^2_{L^\infty}(\|f\|_{L^2}+\|\partial_x^3f\|_{L^2})\|\partial_x^4f\|_{L^2}.
\end{align*}
To finish, the equality
$$
f_x(x)-f_x(x-\al)=\int_0^1\partial_x^2f(x+(s-1)\al)~ \al~ ds,
$$
allows us to obtain (since $\frac{1}{2}+\frac{1}{4}+\frac{1}{4} = 1$):
\begin{align*}
\left| N_2 \right|
&\leq \frac{16\rho}{\pi}\|f_x\|_{L^\infty}\!\!\int_0^1\!\!dr \int_0^1\!\!ds\int_{|\al|<1}~ \frac{d\al}{|\al|^{1-3\ep}}\int_\R dx\\
&\qquad\qquad\qquad\times|\partial_x^4f(x)|
|\partial_x^2f(x+(r-1)\al)||\partial_x^2f(x+(s-1)\al)| \\
&\leq C\|f_x\|_{L^\infty}\|\partial_x^4f\|_{L^2}\|\partial_x^2f\|_{L^4}^2.
\end{align*}
The following estimate
\begin{multline*}
\|\partial_x^2f\|_{L^4}^4=\int_{\R}(\partial_x^2f)^3\partial_x^2fdx=-3\int_{\R}(\partial_x^2f)^2\partial_x^3f\partial_xfdx
\\
\leq 3\|f_x\|_{L^\infty}\|\partial_x^2f\|_{L^4}^2\|\partial_x^3f\|_{L^2},
\end{multline*}
yields
$$
\left| N_2 \right| \leq C\|f_x\|^2_{L^\infty}\|\partial_x^4f\|_{L^2}\|\partial_x^3f\|_{L^2}.
$$
Using Young's inequality
\begin{align*}
\frac{d}{dt}\|\partial_x^3f\|^2_{L^2}\leq C(\ep)(\|f\|^4_{L^\infty}+\|f\|^2_{L^\infty}+\|f_x\|^4_{L^\infty}&+\|f_x\|^2_{L^\infty}+1)\\
&\times(\|f\|^2_{L^2}+\|\partial_x^3f\|^2_{L^2}),
\end{align*}
and therefore the Gronwall inequality yields
$$
\|f\|^2_{L^2}(t)+\|\partial_x^3f\|^2_{L^2}(t)\leq 
(\|f_0\|^2_{L^2}+\|\partial_x^3f_0\|^2_{L^2})\exp\Big(\int_0^t C(\ep)G(s)ds\Big),
$$
for 
$$
G(s)=\|f\|^4_{L^\infty}+\|f\|^2_{L^\infty}+\|f_x\|^4_{L^\infty}(s)+\|f_x\|^2_{L^\infty}(s)+1.
$$ We find $f\in C([0,T];H^3(\R))$ for any $T>0$ by the a priori bounds.

\subsection{Approximation of the initial data}
\label{sec:approx}

The approximation of the initial data described below is needed in order to construct a weak solution. First consider a common approximation to the identity $\zeta \in C^\infty_c(\R)$
 satisfying
$$
\int_{\R} dx ~ \zeta(x) = 1,
\quad
\zeta \ge 0,\quad \zeta(x)=\zeta(-x).
$$
Now we denote the standard mollifier $\zeta_\ep(x) = \zeta(x/\ep) /\ep$
so that $\zeta_\ep(x)$ continues to satisfy the normalization condition above.

For any $f_0\in W^{1,\infty}(\R)$ and $\|\partial_x f_0\|_{L^\infty}<1$, we define the initial data for the regularized system as follows
$$
f_0^\ep (x) = \frac{(\zeta_\ep * f_0)(x)}{1+\ep x^2}.
$$
Notice that $f_0^\ep \in H^s(\R)$ for any $s>0$,   and
$$
\|f_0^\ep\|_{L^\infty}\leq \|f_0\|_{L^\infty}.
$$
More importantly,
$
\|\partial_x f_0^\ep\|_{L^\infty}\leq \|\partial_xf_0\|_{L^\infty}
$
if $\ep$ is sufficiently small (here $\ep$ will generally depend upon the size of $\|f_0\|_{L^\infty}$).
In particular
$$
\partial_x f_0^\ep (x)= \frac{(\zeta_\ep * \partial_x f_0)(x)}{1+\ep x^2}
-2 \ep x
\frac{(\zeta_\ep * f_0)(x)}{(1+\ep x^2)^2},
$$
and clearly
$$
\left|  \frac{(\zeta_\ep * \partial_x f_0)(x)}{1+\ep x^2}
\right|
\le
\|
(\zeta_\ep * \partial_x f_0)
\|_{L^\infty(\R)}
\le
\|
\partial_x f_0
\|_{L^\infty(\R)}.
$$
On the other hand, by splitting into
$
\left| x \right| \le \ep^{-2/3}
$
and
$
\left| x \right| > \ep^{-2/3}
$
we have that
$$
2 \ep x (1+\ep x^2)^{-2}
\le 2\max\{\ep^{1/3}, \ep\}.
$$
On the unbounded region we have
$$
x(1+\ep x^2)^{-2}
=
\left(\frac{1}{\sqrt{x}}+\ep x^{3/2}\right)^{-2} \le 1.
$$
Thus, the desired bound follows if $\ep$ is small enough. 
Therefore global existence of the regularized system \eqref{regularizedM} is true with $f_0^\ep$ if $\ep$ is small enough.

Now consider the solution to the regularized system
\eqref{regularizedM}
with initial data given by the $f_0^\ep$ just described above.
For $\ep>0$ sufficiently small, we decompose
$$
\int_{\R}\eta(x,0)f_0^\ep(x)dx=\int_{\R}\eta(x,0)\frac{(\zeta_\ep * f_0)(x)}{1+\ep x^2}dx=I^\ep_1+I^\ep_2,
$$
where
$$
I^\ep_1=\int_{\R}\eta(x,0)(\zeta_\ep * f_0)(x)\Big(\frac{1}{1+\ep x^2}-1\Big)dx,
$$
and
$$
I^\ep_2=\int_{\R}\eta(x,0)(\zeta_\ep * f_0)(x)dx.
$$
We apply the dominated convergence theorem to find that as $\ep \downarrow 0$ it holds that $I^\ep_1\to 0$. For $I^\ep_2$ we write
$$
I^\ep_2=
\int_{\R}\zeta_\ep *(\eta(\cdot,0))f_0(x)dx.
$$
The $L^1$ approximation of the identity property shows that
$$
I^\ep_2
\to
\int_{\R}\eta(x,0)f_0(x)dx.
$$
Thus, it remains to check the convergence of the rest of the terms in \eqref{sol:wf}.

\subsection{Weak solution}\label{secweakSOL}

In this section we  prove that solutions of the regularized system converge to a weak solution satisfying the bounds
\begin{equation}
\|f\|_{L^\infty}(t)\leq\|f_0\|_{L^\infty},
\quad
\|\partial_x f\|_{L^\infty}(t)\leq\|\partial_x f_0\|_{L^\infty}<1.
\label{oneBOUND}
\end{equation}
Given a collection of regularized solutions $\{f^\ep\}$ to \eqref{regularizedM}, we have the uniform (in $\ep >0$) bound
\begin{equation}
\|f^\ep\|_{L^\infty}(t)\leq\|f_0\|_{L^\infty},
\quad \|\partial_x f^\ep\|_{L^\infty(\R)}(t) \leq 1,
\quad \ep > 0.
\label{unifEbd}
\end{equation}
This implies that there is a subsequence (denoted again by $f^\ep$) such that
$$
\int_0^T\int_{\R}f^\ep(x,t) g(x,t) dxdt\longrightarrow  \int_0^T\int_{\R}f(x,t) g(x,t) dxdt,
$$
$$
\int_0^T\int_{\R}\partial_xf^\ep(x,t) g(x,t) dxdt\longrightarrow  \int_0^T\int_{\R}\partial_x f(x,t) g(x,t) dxdt,
$$
for $f\in L^\infty([0,T];W^{1,\infty}(\R))$ and any $g\in L^1([0,T]\times\R)$ by the Banach-Alaoglu theorem. We find weak* convergence in $L^\infty([0,T];W^{1,\infty}(\R))$.

We denote $B_N=[-N,N]$, then we {\it claim} that there is a subsequence (denoted again by $f^\ep$) such that 
$$
\|f^\ep-f\|_{L^\infty([0,T]\times B_N)}\to 0,\quad \mbox{as}\quad\ep\to 0.
$$
We will prove this fact in the next Section \ref{sec:strong}. Then, up to a subsequence, we find uniform convergence of $f^\ep$ to $f$ on compact sets. Since $f^\ep \in C([0,T]\times \R)$ we find that $f$ is continuous.

The only thing to check is that as $\ep \downarrow 0$ we have
\begin{multline*}
\int_0^T ~ dt ~
\int_{\R} ~ dx ~
\eta_x(x,t)\frac{\rho}{\pi}PV\!\!\int_{\R}~ d\al~ \arctan\left(\frac{f^\ep(x)-f^\ep(x-\al)}{\phi^\ep(\al)}\right)
\\
\to
\int_0^T ~ dt ~
\int_{\R} ~ dx ~
\eta_x(x,t)\frac{\rho}{\pi}PV\!\!\int_{\R}~ d\al~ \arctan\left(\frac{f(x)-f(x-\al)}{\al}\right),
\end{multline*}
where $\phi^\ep(\al)=\al/|\al|^{\ep}$.
The rest of the terms will converge in the usual obvious way (since they are linear).

Choose $M>0$ so that  supp$(\eta)\subseteq B_M$. For any small $\delta >0$ and any large $R \gg 1$, with $R>M+1$ we split the integral as
$$
\int_{\R}~ d\al~
=
\int_{B_\delta}~ d\al~
+
\int_{B_R - B_\delta}~ d\al~
+
\int_{B_R^c}~ d\al.
$$
We begin with a proof that the first and last integrals separately are arbitrarily small independent of $\ep$ for $R>0$ sufficiently large and for $\delta>0$ sufficiently small.
One finds that
$$
\left|
\arctan\left(\frac{f^\ep(x)-f^\ep(x-\al)}{\phi^\ep(\al)}\right)
\right|\le \frac\pi2.
$$
Here we don't need any regularity for $f^\ep$, and we conclude
\begin{multline*}
\left|
\int_0^T dt 
\int_{\R} dx ~
\eta_x(x,t)\frac{\rho}{\pi}PV\int_{B_\delta}~ d\al~ \arctan\left(\frac{f^\ep(x)-f^\ep(x-\al)}{\phi^\ep(\al)}\right)
\right|
\\
\le \rho \| \eta_x\|_{L^1([0,T]\times \R)}\delta.
\end{multline*}
Therefore this term can clearly be chosen arbitrarily small, depending upon the smallness of $\delta$.

We now estimate the term integrated over $B_R^c$.  We note that
$$
\arctan y = \int_0^1\frac{d}{ds}(\arctan (sy)) ds=y \int_0^1\frac{1}{1+s^2y^2} ds,
$$
and therefore
$$
\arctan y = y\left(1- \int_0^1\frac{s^2y^2}{1+s^2y^2} ds\right).
$$
This is morally the first order Taylor expansion for $\arctan$ with remainder in integral form.
With this expression we have that
\begin{multline*}
PV\!\!\int_{B_R^c}~ d\al~ \arctan\left(\frac{f^\ep(x)-f^\ep(x-\al)}{\phi^\ep(\al)}\right)
=
-H^\ep_R(f^\ep)
\\
-PV\int_{B_R^c}~ d\al~ \left(\frac{f^\ep(x)-f^\ep(x-\al)}{\phi^\ep(\al)}\right)^3 \int_0^1\frac{s^2ds}{1+
s^2\left(\frac{f^\ep(x)-f^\ep(x-\al)}{\phi^\ep(\al)}\right)^2}.
\end{multline*}
Here $H^\ep_R$ is a (Hilbert-type) Transform, which has the form
$$
H^\ep_R(f^\ep) \eqdef PV\!\!\int_{B_R^c} d\al~\frac{f^\ep(x-\al)}{\phi^\ep(\al)}.
$$
The principal value is evaluated at infinity (if necessary).
 For the second term in the left hand side notice that the integral is over $B_R^c$ and  the principal value is not necessary.  In particular, we have
$$
\left|
\int_{B_R^c} d\al \int_0^1 ds
\right|
\le
C\| f^\ep \|^3_{L^\infty}
\int_{R}^\infty~ \frac{d\al}{\al^{3-3\ep}}\le
\frac{C\| f_0 \|^3_\infty}{R}.
$$
This term is therefore arbitrarily small if $R$ is chosen sufficiently large. We are going to show the same for
$$
I_R\eqdef \int_{-M}^M dx ~\eta_x(x,t)H^\ep_R(f^\ep).
$$
We write $I_R=J_R+K_R$ where
$$
J_R\eqdef  \lim_{n\to\infty}\int_{-M}^M dx~ \eta_x(x,t)\int_{-n}^{-R}d\al~ \frac{f^\ep(x-\al)}{\phi^\ep(\al)},
$$
$$
K_R\eqdef  \lim_{n\to\infty}\int_{-M}^M dx~ \eta_x(x,t)\int_{R}^{n}d\al~ \frac{f^\ep(x-\al)}{\phi^\ep(\al)}.
$$
We shall show how to control $J_R$, the same follows for $K_R$. We write
$$
J_R=\lim_{n\to\infty}\int_{-M}^M dx ~\eta_x(x,t)\int_{x+R}^{n}d\al~ \frac{f^\ep(\al)}{\phi^\ep(x-\al)}.
$$
An integration by parts yields
$$
J_R=\lim_{n\to\infty}\int_{-M}^M dx ~\eta(x,t)\left(\frac{f(x+R)|R|^\ep}{-R}+(1-\ep)\int_{x+R}^{n}d\al~ \frac{f^\ep(\al)}{|x-\al|^{2-\ep}} \right).
$$
Hence
$$
|J_R|\leq 2\|\eta\|_{L^1}\|f_0\|_{L^\infty}/R^{1/2}.
$$
Since the same estimate holds for $|K_R|$, one finds that $I_R$ is arbitrarily small if $R$ is arbitrarily large.

It remains to prove the convergence of
$$
\int_0^T\int_{B_R - B_\delta} d\al ~\eta_x(x,t)~ \arctan\left(\frac{f^\ep(x)-f^\ep(x-\al)}{\phi^\ep(\al)}\right).
$$
Recall that we have uniform convergence on compact sets. Lets consider 
$$
G^\ep = \frac{f^\ep(x) - f^\ep(x-\alpha) }{\phi^\ep(\alpha)},
$$
where $x\in B_M$ and $\alpha \in B_R - B_\delta$. Since $\arctan$ is a continuous function then $\arctan (G^\ep) \to \arctan (G^0)$ converges uniformly.  Thus also the integral over a bounded region of $\arctan (G^\ep)$ also converges.  Then for any $R> M +1$ and any small  $\delta >0$ as $\ep \downarrow 0$ we have
\begin{multline*}
\int_0^T ~ dt ~
\int_{\R} ~ dx ~
\eta_x(x,t)\frac{\rho}{\pi}\!\!\int_{B_R - B_\delta}~ d\al~ \arctan\left(\frac{f^\ep(x)-f^\ep(x-\al)}{\phi^\ep(\al)}\right)
\\
\to
\int_0^T ~ dt ~
\int_{\R} ~ dx ~
\eta_x(x,t)\frac{\rho}{\pi}\!\!\int_{B_R - B_\delta}~ d\al~ \arctan\left(\frac{f(x)-f(x-\al)}{\al}\right).
\end{multline*}
We conclude by first choosing $R$ sufficiently large and $\delta>0$ sufficiently small and then sending $\ep \downarrow 0$.  Note that $R$ and $\delta$ will generally depend upon the size of $\| f_0\|_\infty$, but this has no effect on our argument.

\subsection{Strong convergence in $L^\infty([0,T];L^\infty(B_R))$}
\label{sec:strong}

In order to prove the strong convergence in $L^\infty([0,T];L^\infty(B_R))$, the idea is to use the non-standard weak space $W_*^{-2,\infty}(B_R)$ which will be defined below. Crucially, we will have the uniform bounds:
\begin{equation}
\begin{split}
\sup_{t\in [0,T]}\| f^\ep (t)\|_{W^{1,\infty}(B_R)}\leq C \| f_0 \|_{L^\infty(\R)},
\\
\sup_{t\in [0,T]}\left\| \frac{\partial f^\ep}{\partial t} (t)\right\|_{ W_*^{-2,\infty}(B_{R})}
\le C  \| f_0 \|_{L^\infty(\R)},
\end{split}
\label{timeBOUND}
\end{equation}
where $C$ does not depend on $R$ or $\ep$. From here we will conclude that for any finite $R>0$ there exists a subsequence such that $f^\ep \to f$ strongly in $L^\infty([0,T];L^\infty(B_R))$. 

We define the space $W_*^{-2,\infty}(B_{R})$ as follows.  For $v\in L^\infty(B_{R})$ we consider the norm
$$
\| v\|_{-2,\infty}=  \sup_{\phi \in W^{2,1}_0(B_{R})\,:\, \| \phi \|_{2,1} \le 1} \left| \int_{B_R}\phi(x) v(x) dx \right|.
$$
Here $W^{2,1}_0(B_{R})$ is the usual set of functions in $W^{2,1}(B_{R})$ which vanish on the boundary of $B_{R}$ together with their first two weak derivatives.  Now the Banach space  $W_*^{-2,\infty}(B_{R})$ is defined to be the completion of $L^\infty(B_{R})$ with respect to the norm $\| \cdot \|_{-2,\infty}$. In general this is all we need for our convergence study.  This will be explained after the proof of Lemma \ref{convergence} below. The full space $W_*^{-2,\infty}$ may be a large space, but because we are going to deal with $f^\ep$ and $\frac{df^\ep}{dt}$, both in $L^\infty([0,T];L^\infty(B_R))$, it is not difficult to find the norms of both functions in $W_*^{-2,\infty}(B_{R})$.

These spaces are suitable because
$$
W^{1,\infty}(B_R)\subset L^\infty(B_R)\subset W_*^{-2,\infty}(B_{R}).
$$
Now the embedding $L^\infty(B_{R})\subset W_*^{-2,\infty}(B_{R})$ is continuous, and the embedding
$W^{1,\infty}(B_R)\subset L^\infty(B_R)$ is compact by the Arzela-Ascoli theorem.

We now proceed to discuss the convergence argument.
Arguments related to Lemma \ref{convergence} below are described for instance in \cite{CF}. However in \cite{CF} reflexive Banach spaces are used. None of thespaces used here are reflexive.

\begin{lemma}\label{convergence}
Consider a sequence $\{u_m\}$ in $C([0,T]\times B_R)$ that is uniformly bounded in the space
$L^\infty ([0,T]; W^{1,\infty}(B_R))$.
Assume further that the weak derivative $\frac{d u_m}{dt}$ is in $L^\infty([0,T];L^\infty(B_R))$ (not necessarily uniform) and is uniformly bounded in
$L^\infty ([0,T];W_*^{-2,\infty}(B_{R}))$. Finally suppose that $\partial_x u_m\in C([0,T]\times B_R)$. Then there exists a subsequence of $u_m$ that converges strongly in $L^\infty ([0,T];L^\infty(B_R))$.
\end{lemma}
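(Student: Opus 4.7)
The plan is to prove this as an Aubin--Lions--Simon style compactness statement, adapted to the non-reflexive setting. The backbone is the chain of embeddings
\begin{equation*}
W^{1,\infty}(B_R) \subset\subset L^\infty(B_R) \subset W_*^{-2,\infty}(B_R),
\end{equation*}
where the first embedding is compact by Arzel\`a--Ascoli (bounded and uniformly Lipschitz functions on the bounded interval $B_R$ are equicontinuous and uniformly bounded, hence precompact in $L^\infty$), and the second is continuous directly from the definition of the norm $\|\cdot\|_{-2,\infty}$ via the duality pairing against $W_0^{2,1}(B_R)$.

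First I would extract a subsequence converging in $C([0,T]; W_*^{-2,\infty}(B_R))$ by applying Arzel\`a--Ascoli to the family $\{t\mapsto u_m(t)\}$ viewed as curves in the Banach space $W_*^{-2,\infty}(B_R)$. Pointwise relative compactness holds because, for each fixed $t$, the set $\{u_m(t)\}$ is bounded in $W^{1,\infty}(B_R)$ by the first estimate in \eqref{timeBOUND} and hence precompact in $L^\infty(B_R)$, so a fortiori in $W_*^{-2,\infty}(B_R)$. Equicontinuity in time follows from the second estimate in \eqref{timeBOUND} by writing, for any test $\phi \in W^{2,1}_0(B_R)$ with $\|\phi\|_{2,1}\le 1$ and any $0\le s<t\le T$,
\begin{equation*}
\int_{B_R}\phi(x)(u_m(t,x)-u_m(s,x))\,dx = \int_s^t d\tau \int_{B_R}\phi(x)\,\partial_\tau u_m(\tau,x)\,dx,
\end{equation*}
which is justified by $\partial_t u_m\in L^\infty([0,T];L^\infty(B_R))$, and then taking the supremum over admissible $\phi$ to conclude $\|u_m(t)-u_m(s)\|_{-2,\infty}\le C|t-s|$ with $C$ independent of $m$.

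Next I would upgrade to convergence in $L^\infty([0,T]; L^\infty(B_R))$ using an Ehrling-type interpolation inequality: for every $\eta>0$ there exists $C_\eta$ such that
\begin{equation*}
\|v\|_{L^\infty(B_R)} \le \eta \|v\|_{W^{1,\infty}(B_R)} + C_\eta \|v\|_{W_*^{-2,\infty}(B_R)}, \qquad v\in W^{1,\infty}(B_R).
\end{equation*}
This inequality is established by the standard contradiction argument using compactness of $W^{1,\infty}\hookrightarrow L^\infty$, together with the fact that the natural map $L^\infty(B_R)\to W_*^{-2,\infty}(B_R)$ is injective (one tests against a sufficiently rich family of $\phi\in W_0^{2,1}(B_R)$). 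Applied to $u_m(t)-u_n(t)$ and combined with the uniform bound in $L^\infty([0,T];W^{1,\infty}(B_R))$, it yields
\begin{equation*}
\sup_{t\in[0,T]}\|u_m(t)-u_n(t)\|_{L^\infty(B_R)} \le 2C_0\,\eta + C_\eta \sup_{t\in[0,T]}\|u_m(t)-u_n(t)\|_{-2,\infty},
\end{equation*}
with $C_0$ independent of $m,n$. Since the extracted subsequence is Cauchy in $C([0,T];W_*^{-2,\infty}(B_R))$, choosing $\eta$ small and then $m,n$ large makes the right side arbitrarily small, so the subsequence is Cauchy, hence convergent, in $L^\infty([0,T];L^\infty(B_R))$.

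The main obstacle I anticipate is that the classical Aubin--Lions framework as in \cite{CF} is phrased for reflexive Banach spaces and exploits weak compactness of the time derivative, whereas none of $L^\infty$, $W^{1,\infty}$, or $W_*^{-2,\infty}$ is reflexive here. This forces the whole argument to be carried out in the strong topology on $W_*^{-2,\infty}(B_R)$ and is precisely the reason the negative-regularity space is defined as the \emph{completion} of $L^\infty(B_R)$ in the $\|\cdot\|_{-2,\infty}$ norm: with this choice Arzel\`a--Ascoli in $C([0,T];W_*^{-2,\infty}(B_R))$ is simply the metric-space version, and the Ehrling inequality supplies the upgrade step without any appeal to duality or weak-$\ast$ compactness.
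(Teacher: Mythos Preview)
Your proposal is correct and follows essentially the same approach as the paper: both use the Ehrling-type interpolation inequality to reduce to (or upgrade from) strong convergence in $L^\infty([0,T];W_*^{-2,\infty}(B_R))$, and both obtain that convergence from equicontinuity in time via the fundamental theorem of calculus together with pointwise-in-time compactness coming from the $W^{1,\infty}$ bound. The only cosmetic difference is that where you invoke the Banach-space Arzel\`a--Ascoli theorem directly, the paper writes out that step by hand (diagonalization over $[0,T]\cap\Q$ followed by an $\epsilon/3$ argument using the uniform Lipschitz-in-time bound).
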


\begin{proof}
Notice that it is enough to prove that the convergence is strong in the space $L^\infty ([0,T]; W_*^{-2,\infty}(B_{R}))$ because of the following interpolation theorem: for any small $\eta >0$ there exists $C_\eta >0$ such that
$$
\| u \|_{L^\infty}
\le
\eta \| u \|_{1,\infty}
+
C_\eta \| u \|_{-2,\infty}.
$$
This holds for all $u \in W^{1,\infty}(B_R).$ See, for example, \cite[Lemma 8.3]{CF}. Here we can replace reflexivity with the Banach-Alaoglu theorem in $W^{1,\infty}(B_R)$.

Let $t,s\in [0,T]$ be otherwise arbitrary.  We have
$$
u_m(t) - u_m(s) = \int_s^t d\tau ~   \frac{\partial u_m}{\partial \tau }(\tau).
$$
This holds rigorously in the sense that

\begin{equation}
\int_{B_R} u_m(t)\phi dx - \int_{B_R} u_m(s)\phi dx =
\int_s^t d\tau \int_{B_R}\frac{\partial u_m}{\partial \tau }(\tau) \phi dx.
\label{difference}
\end{equation}
for any $\phi \in W^{2,1}(B_R)$. Clearly, we find
\begin{equation*}
\| u_m(t) - u_m(s)  \|_{W_*^{-2,\infty}(B_{R})}\leq \sup_{\tau\in[0,T]} \left\| \frac{\partial u_m}{\partial \tau }(\tau) \right\|_{W_*^{-2,\infty}(B_{R})} | t - s|,
\end{equation*}
and therefore
\begin{equation}\label{noseque}
\| u_m(t) - u_m(s)  \|_{W_*^{-2,\infty}(B_{R})}\leq  L | t - s|,
\end{equation}
where
$$
L=\sup_{m\in\N}\sup_{\tau\in[0,T]} \left\| \frac{\partial u_m}{\partial \tau }(\tau) \right\|_{W_*^{-2,\infty}(B_{R})}.
$$
Now we consider $\{t_k\}_{k\in\N}=[0,T]\cap \Q$. We have $u_m(t_k)\in W^{1,\infty}(B_R)$ for any $m$ and $k$. By the standard diagonalization argument, we can get a subsequence (still denoted by $m$) such that
$$
u_m(t_k)\to u(t_k),
$$
for any $k$ in $L^\infty(B_R)$ as in the Arzela-Ascoli theorem.

Consider $\epsilon>0$. Since $[0,T]$ is compact, there exists $J\in\N$ such that
$$
[0,T]\subset \bigcup_{j=1}^J \left(t_{k_j}-\frac{\epsilon}{6L},t_{k_j}+\frac{\epsilon}{6L}\right).
$$
Then there exists $N_j$ such that $\forall$ $m_1,\,m_2\geq N_j$ it holds that
$$\|u_{m_1}(t_{k_j})-u_{m_2}(t_{k_j})\|_{W_*^{-2,\infty}(B_{R})}<\epsilon/3.$$
Taking $N=\D\max_{j=1,\ldots,J} N_j$, $\forall$ $m_1,\,m_2\geq N$ it is easy to check that
$$
\sup_{t\in [0,T]}\|u_{m_1}(t)-u_{m_2}(t)\|_{W_*^{-2,\infty}(B_{R})}<\epsilon.
$$
We find the sequence uniformly Cauchy in $L^\infty ([0,T];W_*^{-2,\infty}(B_{R}))$
which therefore converges strongly to an element in $L^\infty ([0,T];W_*^{-2,\infty}(B_{R}))$.
\end{proof}

Now we apply the above Lemma \ref{convergence} to prove the strong convergence which was {\it claimed} in the previous Section \ref{secweakSOL}. It remains only to prove that for any solution $f^\ep$ to \eqref{regularizedM} we have
$\frac{\partial f^\ep}{\partial t}$ in $L^\infty([0,T];L^\infty(B_R))$ (but not uniformly) and that the second inequality in \eqref{timeBOUND} holds for all sufficiently small $\ep >0$ and for any $R>0$.

Recall that $f^\ep\in C([0,T];H^3(\R))$, then in \eqref{regularizedM} the linear terms are bounded easily. The nonlinear term can be written as
$$
NL=-(1-\ep)C\Lambda^{1-\ep}f^\ep-\int_\R \frac{f^\ep_x(x)-f^\ep_x(x-\al)}{\phi^\ep(\al)}\frac{(\Delta_\al^\ep f^\ep(x))^2}{1+(\Delta_\al^\ep f^\ep(x))^2}d\al,
$$
and therefore
$$
|NL(x,t)|\leq C(\ep)\|f^\ep\|_{H^3}(t),
$$
by Sobolev embedding.

The norm of $\frac{\partial f^\ep}{\partial t} \in W_*^{-2,\infty}(B_R)$ is given by
$$
\left\| \frac{\partial f^\ep}{\partial t} (t)\right\|_{W_*^{-2,\infty}(B_R)}
 = \sup_{\phi \in W_0^{2,1}(B_R): \| \phi \|_{W^{2,1}} \le 1} \left| \int_{\R}  dx ~ \frac{\partial f^\ep}{\partial t}(x,t) \phi(x) \right|.
$$
Since $\phi$ vanishes on the boundary of $B_R$, we can think of $\phi(x)$ as being zero outside of the ball of radius $R$.  Then we are allowed to integrate over the whole space $\R$.  This is also unimportant below because the functions we are estimating are defined on the whole space $\R$.  It is however important because we want to estimate the non-local operator $\Lambda^{1-\ep}$ in this norm via ``integration by parts". Then we have
$$
I=\int_{B_R} \Lambda^{1-\ep}f(x) \phi(x)dx=\int_{\R} \Lambda^{1-\ep}f(x) \phi(x)dx=\int_{\R} f(x) \Lambda^{1-\ep}\phi(x)dx,
$$
and therefore
$$
|I|\leq \|f\|_{L^\infty}(t)\|\Lambda^{1-\ep}\phi\|_{L^1}.
$$
We compute
$$
\Lambda^{1-\ep}\phi(x)=c\int_{\R}\frac{\phi(x)-\phi(x-\al)}{|\al|^{2-\ep}}d\al=\int_{|\al|>1}\!\!d\al+\int_{|\al|<1}\!\!d\al=J_1(x)+J_2(x),
$$
thus
$$
\int_{\R}|J_1(x)|dx\leq \int_{|\al|>1}\frac{d\al}{|\al|^{2-\ep}}\int_\R dx(|\phi(x)|+|\phi(x-\al)|)\leq C\|\phi\|_{L^1(B_R)}.
$$
It is easy to rewrite $J_2$ as follows
$$
J_2(x)=c\int_{|\al|<1}\frac{\phi(x)-\phi(x-\al)-\phi_x(x)\al}{|\al|^{2-\ep}}d\al,
$$
and therefore the following identities
$$
\phi(x)-\phi(x-\al)=\al\int_0^1 \phi_x(x+(s-1)\al)ds,
$$
$$
\phi(x)-\phi(x-\al)-\phi_x(x)\al=\al^2\int_0^1(s-1)ds\int_0^1dr\, \phi_{xx}(x+r(s-1)\al),
$$
allow us to find
$$
\int_{\R}|J_1(x)|dx\leq \int_{|\al|<1}\int_0^1ds\int_0^1dr\int_{\R}dx ~ |\phi_{xx}(x+r(s-1)\al)|\leq 2\|\phi_{xx}\|_{L^1(B_R)}.
$$
Now we clearly have using the a priori bounds that
$$
\left\| \Lambda^{1-\ep}f^\ep \right\|_{W_*^{-2,\infty}(B_R)}
+
\left\| f^\ep_{xx} \right\|_{W_*^{-2,\infty}(B_R)}
\le
C \left\| f^\ep \right\|_{L^{\infty}(\R)}
\le
C \left\| f_0 \right\|_{L^{\infty}(\R)}.
$$
Here the constant is independent of $\ep$ and $R>0$.

For the last term in \eqref{regularizedM} we consider the duality relation
$$
\int_{\R}  dx ~  \partial_x u(x) ~  PV\int_{\R}d\al ~ \arctan(\Delta_\al^\ep f^\ep(x)).
$$
Using exactly the arguments from Section \ref{secweakSOL} with say $R=\delta=1$ we have 
$$
\left|
\int_{\R}  dx ~  \partial_x u(x) ~  PV\int_{\R}d\al ~ \arctan(\Delta_\al^\ep f(x))
\right|
\le
C\| u \|_{W^{1,1}}
 \left\| f_0 \right\|_{L^{\infty}(\R)}.
$$
We thus conclude \eqref{timeBOUND}.  \hfill {\bf Q.E.D.}

\subsection*{{\bf Acknowledgments}}

\smallskip

PC was partially supported by NSF grant DMS-0804380.
DC and FG were partially supported by MCINN grant MTM2008-03754 (Spain) and ERC grant StG-203138CDSIF. FG was partially supported by NSF grant DMS-0901810. RMS was partially supported by NSF grant DMS-0901463.

\begin{quote}
\begin{tabular}{l}
\textbf{Peter Constantin }\\
{\small Department of Mathematics}\\
{\small University of Chicago}\\
{\small 5734 University Avenue, Chicago, IL 60637, USA}\\
{\small Email: const@cs.uchicago.edu}
\end{tabular}
\end{quote}


\begin{quote}
\begin{tabular}{ll}
\textbf{Diego C\'ordoba}\\
{\small Instituto de Ciencias Matem\'aticas}\\
{\small Consejo Superior de Investigaciones Cient\'ificas}\\
{\small Serrano 123, 28006 Madrid, Spain}\\
{\small Email: dcg@icmat.es}
\end{tabular}
\end{quote}

\begin{quote}
\begin{tabular}{ll}
\textbf{Francisco Gancedo}\\
{\small Department of Mathematics}\\
{\small University of Chicago}\\
{\small 5734 University Avenue, Chicago, IL 60637, USA}\\
{\small Email: fgancedo@math.uchicago.edu}
\end{tabular}
\end{quote}

\begin{quote}
\begin{tabular}{ll}
\textbf{Robert M. Strain}\\
{\small Department of Mathematics}\\
{\small University of Pennsylvania}\\
{\small David Rittenhouse Lab}\\
{\small 209 South 33rd Street, Philadelphia, PA 19104, USA} \\ 
{\small Email: strain@math.upenn.edu}
\end{tabular}
\end{quote}

\end{document}